\newtheorem{Def}{Definition}
\newtheorem{Lem}[Def]{Lemma}
\affiliation{The University of Electro-Communications}{1-5-1 Chofugaoka, Chofu, Tokyo, 182-8585, Japan}
\email{yusaku.yamamoto@uec.ac.jp}
\title{Roundoff error analysis of the double exponential formula-based method for the matrix sign function}
\abstract{
In this paper, we perform a roundoff error analysis of an integration-based method for computing the matrix sign function recently proposed by Nakaya and Tanaka. The method expresses the matrix sign function using an integral representation and computes the integral numerically by the double-exponential formula. While the method has large-grain parallelism and works well for well-conditioned matrices, its accuracy deteriorates when the input matrix is ill-conditioned or highly nonnormal. We investigate the reason for this phenomenon by a detailed roundoff error analysis.
}
\keywords{matrix sign function, double-exponential formula, numerical integration, roundoff error analysis, elliptic integral}
\begin{document}

\maketitle

\section{Introduction}
Let $A\in\mathbb{R}^{n\times n}$ be a real matrix and $A=XJX^{-1}$ be its Jordan decomposition. Here, $X\in\mathbb{R}^{n\times n}$ is a nonsingular matrix and $J=J_{n_1}(\lambda_1)\oplus\cdots\oplus J_{n_r}(\lambda_r)$, where $J_{n_i}(\lambda_i)$ ($i=1, \ldots r$) is a Jordan cell of size $n_i\times n_i$ with eigenvalue $\lambda_i$ and $n_1+\cdots+n_r=n$. We assume that $A$ has no purely imaginary eigenvalues. Also, let $I_n$ be the identity matrix of order $n$ and ${\rm sign}(z)$, where $z\in\mathbb{C}$, be the (scalar) sign function defined by
\begin{equation}
{\rm sign}(z)=\begin{cases}
1, & {\rm Re}(z)>0, \\
-1, & {\rm Re}(z)<0.
\end{cases}
\end{equation}
Then, the {\it matrix sign function} \cite{Higham08} is defined by
\begin{equation}
{\rm sign}(A)\equiv X({\rm sign}(\lambda_1)I_{n_1}\oplus\cdots\oplus{\rm sign}(\lambda_r) I_{n_r})X^{-1}.
\label{eq:definition}
\end{equation}
Note that the matrix sign function is undefined when $A$ has purely imaginary eigenvalues. The matrix sign function has applications to the solution of the Sylvester equation and computation of eigendecomposition \cite{Nakatsukasa16}.

Several approaches have been proposed for computing the matrix sign function, including Schur's method, Newton's method and rational function approximation \cite{Higham08}. Recently, Nakaya and Tanaka proposed a new method for computing the matrix sign function using numerical quadrature \cite{Nakaya21}. The idea is to expresses ${\rm sign}(A)$ using an integral representation:
\begin{equation}
{\rm sign}(A)=\frac{2}{\pi}\int_0^{\infty}(t^2 I+A^2)^{-1}A\,dt
\label{eq:integral_representation}
\end{equation}
and compute it numerically with the double-exponential (DE) formula \cite{Takahashi74}. In the following, We refer to this method as the {\it DE-based method} for the matrix sign function. Since the computation of $(t^2 I+A^2)^{-1}$ can be done for each sample point independently, the method has large-grain parallelism and is well-suited for modern high performance computers. Detailed analysis of the discretization and truncation errors of the DE-based method, assuming exact arithmetic, is given in \cite{Nakaya21}. The reader is also referred to \cite{Miyashita22} for an analysis in the case where $A$ is diagonalizable.

According to our numerical experiments, the method works well when $A$ is a well conditioned and close-to-normal matrix and delivers results with accuracy comparable to that of Schur's method. However, we observed that as $A$ becomes ill-conditioned or deviates from normality (which means that $X$ becomes ill-conditioned), the numerical error of the method increases rapidly. This problem is not fixed even if we make the step size sufficiently small or make the truncation points sufficiently distant from the origin. Hence, we suppose that this degradation of accuracy originates not from discretization or truncation errors but from rounding errors.

In this paper, we present a roundoff error analysis of the DE-based method for the matrix sign function to find the cause of this accuracy degradation and look for a direction for possible improvement. For simplicity, we focus on the case where $A$ is diagonalizable and consider two main sources of rounding errors, namely, the error arising in the computation of $(t^2 I+A^2)^{-1}A$ at each sample point and the error that occurs when summing up the contributions from the sample points.

The rest of this paper is structured as follows. In Section 2, we review the DE-based method for the matrix sign function. Roundoff error analysis of this method is given in Section 3. Numerical results that illustrate the validity of the derived error bound are presented in Section 4. Finally, Section 5 provides some conclusion.

\section{DE-based method for the matrix sign function}
In the DE-based method, we approximate the integral \eqref{eq:integral_representation} using the DE formula. By letting $Y(t)\equiv (t^2I+A^2)^{-1}A$ and $\phi(x)=\exp(\frac{\pi}{2}\sinh x)$, we have
\begin{align}
{\rm sign}(A) &= \frac{2}{\pi}\int_0^{\infty}Y(t)dt = \frac{2}{\pi}\int_{-\infty}^{\infty}Y(\phi(x))\phi'(x)dx \nonumber \\
&\simeq \frac{2}{\pi}h\sum_{k=N^-}^{N^+}Y(\phi(kh))\phi'(kh),
\end{align}
where $h$ is the step size and $-N^->0$ and $N^+>0$ are the number of sample points in the negative and positive part of the $x$-axis, respectively. It is shown that by choosing $-N^-=N^+=N$ and $h=\log(8dN)/N$, where $d$ is some constant depending on $A$, the discretization and truncation errors of the DE-based method decrease almost exponentially with $N$ \cite{Nakaya21}. In this method, the most computationally intensive part is the calculation of $Y(\phi(kh))$ for $k=N^-, \ldots, N^+$. Since this can be done for each $k$ independently, the method has large-grain parallelism.

\section{Roundoff error analysis}
We present a roundoff error analysis of the DE-based method. We denote a quantity computed in floating-point arithmetic by $fl(\cdot)$ or by a symbol with a hat. ${\bf u}$ denotes the unit roundoff and $\gamma_m\equiv m{\bf u}/(1-m{\bf u})$. For a matrix $A=(a_{ij})$, $|A|$ means a matrix whose elements are $|a_{ij}|$. For matrices $A$ and $B$ of the same dimension, $A\le B$ means componentwise inequality. $\|\cdot\|_2$ and $\|\cdot\|_F$ denote the 2-norm and the Frobenius norm, respectively. The condition number of $A$ is denoted by $\kappa_2(A)$.

\subsection{Sources of roundoff errors}
The true roundoff error arising in the computation of the DE-based method can be written as follows.
\begin{align}
& E_{\rm true} \nonumber \\
&= \overline{fl}\!\left[\frac{2}{\pi}h\!\cdot\!fl\left\{\sum_{k=N^-}^{N^+}\!\overline{fl}\left(fl(Y(\overline{fl}(\phi(kh))))\!\cdot\!\overline{fl}(\phi'(kh))\right)\!\right\}\right] \nonumber \\
& \quad - \frac{2}{\pi}h\sum_k Y(\phi(kh))\phi'(kh).
\end{align}
To simplify the analysis, we assume that scalar functions such as $\phi(x)$ and $\phi'(x)=\frac{\pi}{2}\exp(\frac{\pi}{2}\sinh x)\cosh x$ can be computed without errors. We also assume that multiplications by scalars such as $\phi'(x)$ and $\frac{2}{\pi}h$ can be done without errors. This amounts to assuming that the computations denoted by $\overline{fl}(\cdot)$ can be done exactly. Under this assumption, we can write the total roundoff error as
\begin{align}
E &= \frac{2}{\pi}h\,fl\left(\sum_k fl(Y(\phi(kh)))\phi'(kh)\right) \nonumber \\
& \quad - \frac{2}{\pi}h\sum_k Y(\phi(kh))\phi'(kh) \nonumber \\
&= \frac{2}{\pi}h\cdot fl\left(\sum_k fl(Y(\phi(kh)))\phi'(kh)\right) \nonumber \\
& \quad - \frac{2}{\pi}h\sum_{k}fl(Y(\phi(kh)))\phi'(kh) \nonumber \\
& \quad + \frac{2}{\pi}h\sum_{k}fl(Y(\phi(kh)))\phi'(kh) \nonumber \\
& \quad - \frac{2}{\pi}h\sum_k Y(\phi(kh))\phi'(kh) \nonumber \\
&= \left\{\frac{2}{\pi}h\cdot fl\left(\sum_k fl(Y(\phi(kh)))\phi'(kh)\right) \right. \nonumber \\
& \quad\quad \left. - \frac{2}{\pi}h\sum_{k}fl(Y(\phi(kh)))\phi'(kh)\right\} \nonumber \\
& \quad + \frac{2}{\pi}h\sum_{k}\left(fl(Y(\phi(kh)))-Y(\phi(kh))\right)\phi'(kh) .
\end{align}
The last expression suggests that the total roundoff error consists of the following two parts.
\begin{itemize}
\item Errors in the computation of $Y(t)$: Let $\hat{Y}(t)\equiv fl(Y(t))$ and $\tilde{E}_1(t)\equiv \hat{Y}(t)-Y(t)$, where $t=\phi(kh)$ and $N^-\le k\le N^+$. The weighted sum of these errors,
\begin{equation}
E_1\equiv \frac{2}{\pi}h\sum_{k=N^-}^{N^+}|\tilde{E}_1(\phi(kh))|\phi'(kh),
\label{eq:E1}
\end{equation}
contributes to the total roundoff error.
\item Errors in the summation. Strictly speaking, the summand is $fl(Y(\phi(kh))\phi'(kh))$, but we substitute it with its exact counterpart, $Y(\phi(kh))\phi'(kh)$, for simplicity. This can be justified because their difference is $O({\bf u})$, as will be shown later (see \eqref{eq:hatXerror}), and therefore causes only $O({\bf u}^2)$ difference in the value of $E_2$. Thus, the error is defined as
\begin{align}
E_2 &\equiv \frac{2}{\pi}h\cdot fl\left(\sum_{k=N^-}^{N^+} Y(\phi(kh))\phi'(kh)\right) \nonumber \\
& \quad\quad - \frac{2}{\pi}h\sum_{k=N^-}^{N^+} Y(\phi(kh))\phi'(kh).
\label{eq:E2}
\end{align}
\end{itemize}
In the following, we evaluate $E_1$ and $E_2$ separately.

\subsection{Evaluation of $\tilde{E}_1(t)=\hat{Y}(t)-Y(t)$}
Let $B\equiv t^2 I+A^2$ and denote the $j$th column of $A$, $Y(t)$ and $\hat{Y}(t)$ by ${\bf a}_j$, ${\bf y}_j$ and $\hat{\bf y}_j$, respectively. Then, ${\bf y}_j$ is computed as the solution of the linear simultaneous equations $B{\bf y}_j={\bf a}_j$. Thus, $\hat{\bf y}_j-{\bf y}_j$ consists of two parts, the error in the formation of $B$ and that in the solution of the linear simultaneous equations. Denote the former error by $\Delta B'$. Then, if we consider only the error in the computation of $A^2$ and ignore the error arising from the addition of $t^2 I$, we have $|\Delta B'|\le\gamma_n|A|^2$ from the result of the standard error analysis \cite[\S 3.5]{Higham02} and therefore
\begin{equation}
\|\Delta B^{\prime}\|_F \le \gamma_n\|A\|_F^2.
\label{eq:deltaBbound}
\end{equation}
Now, $\hat{\bf y}_j$ is obtained by solving the linear simultaneous equation with the coefficient matrix $\tilde{B}\equiv B+\Delta B'$ using Gaussian elimination with partial pivoting in floating-point arithmetic. In that case, it is well known that $\hat{\bf y}_j$ satisfies the following equation \cite[Theorem 9.4]{Higham02}:
\begin{equation}
(\tilde{B}+\Delta B_j^{\prime\prime})\hat{\bf y}_j={\bf a}_j, \quad |\Delta B_j^{\prime\prime}|\le \gamma_{3n}|\hat{L}|\,|\hat{U}|,
\label{eq:Higham9-4}
\end{equation}
where $\hat{L}$ and $\hat{U}$ are computed LU factors of $\tilde{B}$ and $\Delta B_j^{\prime\prime}$ is the backward error in the solution of the linear simultaneous equation. Now, we evaluate $\|\Delta B_j^{\prime\prime}\|_F$. First, since $|\hat{l}_{ij}|\le 1$, we have $\|\hat{L}\|_F\le n$. Next, let the coefficient matrix in the $k$th step of the Gaussian elimination be $\tilde{B}^{(k)}=(\tilde{b}_{ij}^{(k)})$ and define the {\it growth factor} as
\begin{equation}
\hat{\rho}_n = \frac{\max_{i,j,k}|\tilde{b}_{i,j}^{(k)}|}{\max_{i,j}|\tilde{b}_{i,j}|}.
\end{equation}
Then,
\begin{equation}
|\hat{u}_{i,j}|=|\tilde{b}_{i,j}^{(i)}| \le \hat{\rho}_n\max_{i',j'}|\tilde{b}_{i',j'}| \le \hat{\rho}_n\|\tilde{B}\|_2
\end{equation}
and we have $\|\hat{U}\|_F\le n\hat{\rho}_n\|\tilde{B}\|_2$. From these results, it follows that
\begin{align}
\|\Delta B_j^{\prime\prime}\|_F &\le n^2\gamma_{3n}\hat{\rho}_n\|\tilde{B}\|_2 \nonumber \\
&\le n^2\gamma_{3n}\hat{\rho}_n(\|B\|_2+\gamma_n\|A\|_F^2) \nonumber \\
&= n^2\gamma_{3n}\hat{\rho}_n\|t^2I + A^2\|_2 + O({\bf u}^2). \nonumber \\
&\simeq n^2\gamma_{3n}\hat{\rho}_n\|t^2I + A^2\|_2.
\label{eq:DeltaAbound}
\end{align}
It is known that the growth factor $\hat{\rho}_n$ is almost independent of $n$ and is typically around 10 \cite[\S 9.4]{Higham02}. Combining \eqref{eq:deltaBbound}, \eqref{eq:Higham9-4} and \eqref{eq:DeltaAbound}, we know that $\hat{\bf y}_j$ satisfies
\begin{align}
& (B+\Delta B_j)\hat{\bf y}_j={\bf a}_j, \\
& \|\Delta B_j\|_F \le \gamma_n\|A\|_F^2+n^2\gamma_{3n}\hat{\rho}_n\|t^2I + A^2\|_2,
\end{align}
where $\Delta B_j = \Delta B'+ \Delta B_j^{\prime\prime}$. Now, assume that$\|B^{-1}\|_2\|\Delta B_j\|_2\le 1/2$. Applying the perturbation theory for linear simultaneous equations \cite[Theorem 7.2]{Higham02} gives
\begin{align}
\|\hat{\bf y}_j-{\bf y}_j\| &\le \frac{\|B^{-1}\|_2\|\Delta B_j\|_2}{1-\|B^{-1}\|_2\|\Delta B_j\|_2}\,\|{\bf y}_j\| \nonumber \\
&\le 2\|B^{-1}\|_2\|\Delta B_j\|_2\|{\bf y}_j\| \nonumber \\
&\le 2\|(t^2I+A^2)^{-1}\|_2 \nonumber \\
& \quad \times(\gamma_n\|A\|_F^2+n^2\gamma_{3n}\hat{\rho}_n\|t^2I + A^2\|_2)\|{\bf y}_j\| . \nonumber
\end{align}
From this, it is immediate to show that
\begin{align}
& \|\tilde{E}_1(t)\|_F=\|\hat{Y}(t)-Y(t)\|_F \nonumber \\
&\le 2\|(t^2I+A^2)^{-1}\|_2(\gamma_n\|A\|_F^2+n^2\gamma_{3n}\hat{\rho}_n\|t^2I + A^2\|_2) \nonumber \\
& \quad \times\|(t^2I+A^2)^{-1}A\|_F.
\label{eq:hatXerror}
\end{align}

\subsection{Evaluation of $E_1$}
Now we evaluate $E_1$ using \eqref{eq:E1} and \eqref{eq:hatXerror}. From the assumption of diagonalizability, $A$ can be written as $A=X\Lambda X^{-1}$, where $\Lambda={\rm diag}(\lambda_1,\ldots,\lambda_n)$. Hence,
\begin{align}
\|A\|_F^2 &= \|X\Lambda X^{-1}\|_F^2 \nonumber \\
&\le (\|X\|_2\|\Lambda\|_F\|X^{-1}\|_2)^2 = (\kappa_2(X))^2\|\Lambda\|_F^2,
\label{eq:AF2}
\end{align}
where we used $\|AB\|_F \le \|A\|_2\|B\|_F$. Similarly, we have
\begin{align}
\|t^2 I + A^2\|_2 &\le \kappa_2(X)\|t^2I+\Lambda^2\|_2,
\label{eq:YF1} \\
\|(t^2 I + A^2)^{-1}\|_2 &\le \kappa_2(X)\|(t^2I+\Lambda^2)^{-1}\|_2, \label{eq:YF2} \\
\|(t^2I+A^2)^{-1}A\|_F &\le \kappa_2(X)\|(t^2I+\Lambda^2)^{-1}\Lambda\|_F. \label{eq:YF3}
\end{align}
Substituting \eqref{eq:AF2} through \eqref{eq:YF3} into \eqref{eq:hatXerror} gives
\begin{align}
& \|\tilde{E}_1(t)\|_F \nonumber \\
&\le 2\left\{\gamma_n(\kappa_2(X))^4\|\Lambda\|_F^2+n^2\gamma_{3n}\hat{\rho}_n(\kappa_2(X))^3\|t^2I+\Lambda^2\|_2\right\} \nonumber \\
& \quad \times\|(t^2I+\Lambda^2)^{-1}\|_2 \|(t^2I+\Lambda^2)^{-1}\Lambda\|_F.
\label{eq:Yterror}
\end{align}
From \eqref{eq:E1} and \eqref{eq:Yterror}, the contribution to the total roundoff error can be computed as
\begin{align}
& \|E_1\|_F \le \frac{2}{\pi}h\sum_{k=N^-}^{N^+}\|\tilde{E}_1(\phi(kh))\|_F\phi'(kh) \nonumber \\
&\simeq \frac{2}{\pi}\int_{-\infty}^{\infty}\|\tilde{E}_1(\phi(x))\|_F\phi'(x)\,dx 
= \frac{2}{\pi}\int_0^{\infty}\|\tilde{E}_1(t)\|_F\,dt \nonumber \\
&\le \frac{4}{\pi}\int_0^{\infty}\left\{\gamma_n(\kappa_2(X))^4\|\Lambda\|_F^2 \right.\nonumber \\
&\quad\quad\quad\quad\quad \left.+n^2\gamma_{3n}\hat{\rho}_n(\kappa_2(X))^3\|t^2I+\Lambda^2\|_2\right\} \nonumber \\
&\quad\quad\quad\quad \times \|(t^2I+\Lambda^2)^{-1}\|_2 \|(t^2I+\Lambda^2)^{-1}\Lambda\|_F\,dt.
\label{eq:Yphibound}
\end{align}
Let us write the integrand of the last integral as $e_{n,X,\Lambda}(t)$. To evaluate the integral, we let $t_1=\sqrt{2}\|\Lambda\|_F$ and divide the integration interval into two parts, $[0,t_1]$ and $[t_1,\infty)$. When $t\ge t_1$, we have
\begin{align}
\|\Lambda\|_F^2 &\le \frac{t^2}{2}, \label{eq:LFbound} \\
\|t^2I+\Lambda^2\|_2 &= \max_{1\le j\le n}|t^2+\lambda_j^2| \le \max_{1\le j\le n}(t^2+|\lambda_j|^2) \nonumber \\
&\le t^2 + \frac{t^2}{2} = \frac{3}{2}t^2, \\
\|(t^2I+\Lambda^2)^{-1}\|_2 &= \max_{1\le j\le n}\frac{1}{|t^2+\lambda_j^2|} \le \max_{1\le j\le n}\frac{1}{t^2-|\lambda_j|^2} \nonumber \\
&\le \frac{1}{t^2-\frac{1}{2}t^2} =\frac{2}{t^2}, \\
\|(t^2I+\Lambda^2)^{-1}\Lambda\|_F &= \sqrt{\sum_{j=1}^n\frac{|\lambda_j|^2}{|t^2+\lambda_j^2|^2}} \le \frac{2}{t^2}\sqrt{\sum_{j=1}^n|\lambda_j|^2} \nonumber \\
&= \frac{2}{t^2}\|\Lambda\|_F.
\label{eq:integrandbound3}
\end{align}
Now, let
\begin{equation}
c_{n,X,\Lambda}=\gamma_n(\kappa_2(X))^4+3n^2\gamma_{3n}\hat{\rho}_n(\kappa_2(X))^3.
\end{equation}
Then, we have from \eqref{eq:LFbound} through \eqref{eq:integrandbound3},
\begin{align}
0 \le \int_{t_1}^{\infty}e_{n,X,\Lambda}(t)\,dt
&\le c_{n,X,\Lambda}\int_{t_1}^{\infty}\frac{t^2}{2}\cdot\frac{2}{t^2}\cdot\frac{2}{t^2}\|\Lambda\|_F\,dt \nonumber \\
&= c_{n,X,\Lambda}\|\Lambda\|_F\int_{t_1}^{\infty}\frac{2}{t^2}\,dt \nonumber \\
&= c_{n,X,\Lambda}\|\Lambda\|_F\cdot\frac{\sqrt{2}}{\|\Lambda\|_F} \nonumber \\
&= \sqrt{2}\,c_{n,X,\Lambda}.
\label{eq:integralbound3b}
\end{align}
On the other hand, when $0\le t\le t_1$,
\begin{align}
\|t^2I+\Lambda^2\|_2 &\le \max_{1\le j\le n}(t^2+|\lambda_j|^2) \le 3\|\Lambda\|_F^2, \\
\|(t^2I+\Lambda^2)^{-1}\|_2 &= \max_{1\le j\le n}\frac{1}{|t^2+\lambda_j^2|}, \\
\|(t^2I+\Lambda^2)^{-1}\Lambda\|_F &\le \|\Lambda\|_F\|(t^2I+\Lambda^2)^{-1}\|_2 \nonumber \\
&\le \|\Lambda\|_F\max_{1\le j\le n}\frac{1}{|t^2+\lambda_j^2|}. \label{eq:integrandbound6}
\end{align}
Hence,
\begin{align}
0 &\le \int_0^{t_1}e_{n,X,\Lambda}(t)\,dt \le c_{n,X,\Lambda}\|\Lambda\|_F^3\int_0^{t_1}\max_{1\le j\le n}\frac{dt}{|t^2+\lambda_j^2|^2} \nonumber \\
&\quad\le c_{n,X,\Lambda}\|\Lambda\|_F^3\sum_{j=1}^n\int_0^{t_1}\frac{dt}{|t^2+\lambda_j^2|^2} \nonumber \\
&\quad\le \frac{1}{2}c_{n,X,\Lambda}\|\Lambda\|_F^3\sum_{j=1}^n\int_{-\infty}^{\infty}\frac{dt}{|t^2+\lambda_j^2|^2}.
\end{align}
Noting that ${\rm Re}(\lambda_j)\ne 0$, we can evaluate this integral as
\begin{equation}
\int_{-\infty}^{\infty}\frac{dt}{|t^2+\lambda_j^2|^2} = \frac{\pi}{2|\lambda_j|^2|{\rm Re}(\lambda_j)|}.
\label{eq:quadraticintegral}
\end{equation}
See Lemma \ref{Lemma1} in the Appendix. Thus, it follows that
\begin{align}
0&\le \int_0^{t_1}e_{n,X,\Lambda}(t)\,dt
\le \frac{\pi}{4}c_{n,X,\Lambda}\|\Lambda\|_F^3\sum_{j=1}^n \frac{1}{|\lambda_j|^2|{\rm Re}(\lambda_j)|} \nonumber \\
&= \frac{\pi}{4}c_{n,X,\Lambda}\|\Lambda\|_F^3\|\Lambda^{-1}|{\rm Re}(\Lambda)|^{-\frac{1}{2}}\|_F^2.
\label{eq:quadraticintegral2}
\end{align}
Combining \eqref{eq:Yphibound}, \eqref{eq:integralbound3b} and \eqref{eq:quadraticintegral2} gives
\begin{align}
\|E_1\|_F &\le c_{n,X,\Lambda}\left(\frac{4\sqrt{2}}{\pi}+\|\Lambda\|_F^3\|\Lambda^{-1}|{\rm Re}(\Lambda)|^{-\frac{1}{2}}\|_F^2\right) \nonumber \\
&=\left(\gamma_n(\kappa_2(X))^4+3n^2\gamma_{3n}\hat{\rho}_n(\kappa_2(X))^3\right) \nonumber \\&\quad \times\left(\frac{4\sqrt{2}}{\pi}+\|\Lambda\|_F^3\|\Lambda^{-1}|{\rm Re}(\Lambda)|^{-\frac{1}{2}}\|_F^2\right).
\label{TermwiseError}
\end{align}

\subsection{Evaluation of $E_2$}
Next, we evaluate $E_2$, the roundoff error in the summation. Let us consider the sum of $m$ matrices, $S_m=\sum_{i=1}^m T_i$, and denote the computed result by $\hat{S}_m$. Then, from the formula of roundoff error bound for scalar summation \cite[Problem 4.3]{Higham02}, $|\hat{S}_m-S_m|$ can be bounded as follows \cite{Yamamoto22}.
\begin{equation}
|\hat{S}_m-S_m| \le \gamma_{m-1}(|T_1|+|T_2|)+\sum_{i=3}^m\gamma_{m-i+1}|T_i|.
\end{equation}
Taking the Frobenius norm of both sides and replacing $\gamma_{m-i+1}$ with $\gamma_{m-1}$ for simplicity gives
\begin{equation}
\|\hat{S}_m-S_m\|_F \le \gamma_{m-1}\sum_{i=1}^m\|T_i\|_F. 
\end{equation}
By applying this result to \eqref{eq:E2} and writing $M=N_+-N_-$, we have
\begin{align}
\|E_2\|_F &=\frac{2}{\pi}h\gamma_{M}\sum_{k=N^-}^{N^+}\|Y(\phi(kh))\|_F\phi'(kh) \nonumber \\
&\simeq \frac{2}{\pi}\gamma_{M}\int_{-\infty}^{\infty}\|Y(\phi(x))\|_F\phi'(x)\,dx \nonumber \\
&= \frac{2}{\pi}\gamma_{M}\int_0^{\infty}\|Y(t)\|_F\,dt \nonumber \\
&= \frac{2}{\pi}\gamma_{M}\int_0^{\infty}\|(t^2I+A^2)^{-1}A\|_F\,dt \nonumber \\
&\le \frac{2}{\pi}\gamma_{M}\kappa_2(X)\int_0^{\infty}\|(t^2I+\Lambda^2)^{-1}\Lambda\|_F\,dt \nonumber \\
&= \frac{2}{\pi}\gamma_{M}\kappa_2(X)\int_0^{\infty}\sqrt{\sum_{j=1}^n\frac{|\lambda_j|^2}{|t^2+\lambda_j^2|^2}}\,dt \nonumber \\
&\le \frac{2}{\pi}\gamma_{M}\kappa_2(X)\int_0^{\infty}\sum_{j=1}^n\frac{|\lambda_j|}{|t^2+\lambda_j^2|}\,dt \nonumber \\
&= \frac{2}{\pi}\gamma_{M}\kappa_2(X)\sum_{j=1}^n|\lambda_j|\int_0^{\infty}\frac{1}{|t^2+\lambda_j^2|}\,dt.
\label{eq:Yphibound2}
\end{align}
Now, we explain how to evaluate the integral in the last expression. Let $\lambda_j=\mu_j+{\rm i}\nu_j$, where ${\rm i}=\sqrt{-1}$. Then, the denominator of the integrand can be written as the square root of a real quartic polynomial in $t$. Hence, as is well known, the integral can be transformed into the complete elliptic integral of the first kind:
\begin{equation}
K(k)=\int_0^{\frac{\pi}{2}}\frac{d\theta}{\sqrt{1-k^2\sin^2\theta}}.
\label{eq:elliptic2}
\end{equation}
Details of this transformation are given in Lemma \ref{Lemma2}.
Further, by employing an upper bound on $K(k)$, which is provided in Lemma \ref{Lemma3}, we can obtain an upper bound on this integral. The evaluation proceeds as follows.
\begin{align}
& \int_0^{\infty}\frac{1}{|t^2+\lambda_j^2|}\,dt \nonumber \\
&= \int_0^{\infty}\frac{1}{\sqrt{t^4+2(\mu_j^2-\nu_j^2)t^2+(\mu_j^2+\nu_j^2)^2}}\,dt \nonumber \\
&= \frac{1}{\sqrt{\mu_j^2+\nu_j^2}}K\left(\frac{|\nu_j|}{\sqrt{\mu_j^2+\nu_j^2}}\right) \nonumber \\
&\le \frac{\pi}{2}\cdot\frac{1}{\sqrt{\mu_j^2+\nu_j^2}}\left\{1-\frac{1}{\pi}\log\left(\frac{\mu_j^2}{\mu_j^2+\nu_j^2}\right)\right\} \nonumber \\
&= \frac{\pi}{2}\cdot\frac{1}{|\lambda_j|}\left\{1-\frac{1}{\pi}\log\left(\frac{({\rm Re}(\lambda_j))^2}{|\lambda_j|^2}\right)\right\},
\end{align}
where we used Lemma \ref{Lemma2} in the second equality and Lemma \ref{Lemma3} in the first inequality. Inserting this into Eq.~\eqref{eq:Yphibound2} gives
\begin{equation}
\|E_2\|_F \le \gamma_{M}\kappa_2(X)\left\{n-\frac{2}{\pi}\sum_{j=1}^n\log\left(\frac{|{\rm Re}(\lambda_j)|}{|\lambda_j|}\right)\right\}.
\label{eq:SumError}
\end{equation}

\subsection{Total roundoff error}
The total roundoff error is given as the sum of $E_1$ and $E_2$. From Eqs.~\eqref{TermwiseError} and \eqref{eq:SumError}, we see that the bound on $\|E_1\|_F$ is cubic in $n$ and quartic in $\kappa_2(X)$, while that of $\|E_2\|_F$ is linear in both $n$ and $\kappa_2(X)$. Also, whereas both bounds show singularity when ${\rm Re}(\lambda_j)$ approaches zero, the singularity in the former is $O(1/{\rm Re}(\lambda_j))$, while that of the latter is logarithmic. From these facts, we can say that $E_1$ is dominant in the roundoff error.

\section{Numerical results}
We performed numerical experiments to check the validity of our error bound. In the experiments, we used a PC with the AMD Ryzen 7 3700X Processor (8-Core, 3.59GHz). Our program was written in Python. To compute the matrix products and inverses, we used NumPy. All the computations were performed in double precision arithmetic.

In the experiments, we computed ${\rm sign}(A)$ with the DE-based method, compared the result with that computed by Eq.~\eqref{eq:definition}, and evaluated their difference $E$. In the DE-based method, $h$ was chosen sufficiently small and $N^+$ and $N^-$ were chosen sufficiently large so that the discretization and truncation errors can be neglected. The test matrices were constructed as $A=X\Lambda  X^{\top}$, where $X$ is a nonsingular matrix with a specified condition number $\kappa_2(X)$ and $\Lambda$ is a real random diagonal matrix with a specified condition number $\kappa_2(\Lambda)$. To control the condition number of $X$, we generated $X$ as $X=QDQ^{\top}$, where $Q$ is a random orthogonal matrix and $D$ is a real random diagonal matrix with the condition number $\kappa_2(X)$. The matrix size $n$ was fixed to 100 in the first and second experiments.

In the first experiment, we fixed $\Lambda$ and varied $\kappa_2(X)$ from $10^1$ to $10^6$. The results are shown in Fig.~\ref{fig:fig1}(a). Here, the horizontal axis and the vertical axis denote $\kappa_2(X)$ and $\|E\|_F$, respectively, and log-log plot is used. By regression, the slope is estimated to be 3.886. This is in accordance with our theoretical result, which shows that the dominant component of the roundoff error, $\|E_1\|_F$, is bounded by a quantity proportional to $(\kappa_2(X))^4$.

In the second experiment, we fixed $X$ and varied $\kappa_2(\Lambda)$ from $10^0$ to $10^6$. The result is shown in Fig.~\ref{fig:fig1}(b). In this case, the slope is estimated to be 1.957. This is smaller than expected from our bound \eqref{TermwiseError}, which suggests that $\|E_1\|_F$ is proportional to $(\kappa_F(\Lambda))^3$ when all eigenvalues of $A$ are real. Thus, our bound seems to be somewhat overestimate with respect to $\Lambda$, although it still is a valid upper bound. Note also that the condition number appearing in \eqref{TermwiseError} is $\kappa_F(\Lambda)$ (when all eigenvalues are real), while that specified in the experiment is $\kappa_2(\Lambda)$. Hence, strictly speaking, they are not directly comparable. However, since $\|A\|_2\le \|A\|_F\le \sqrt{n}\|A\|_2$ holds for any matrix $A$, we have
\begin{equation}
\kappa_2(\Lambda) \le \kappa_F(\Lambda) \le n\kappa_2(\Lambda)
\end{equation}
and the comparison is justified if we ignore a polynomial factor in $n$.

\begin{figure}[h]
\centerline{\includegraphics[height=1.4in]{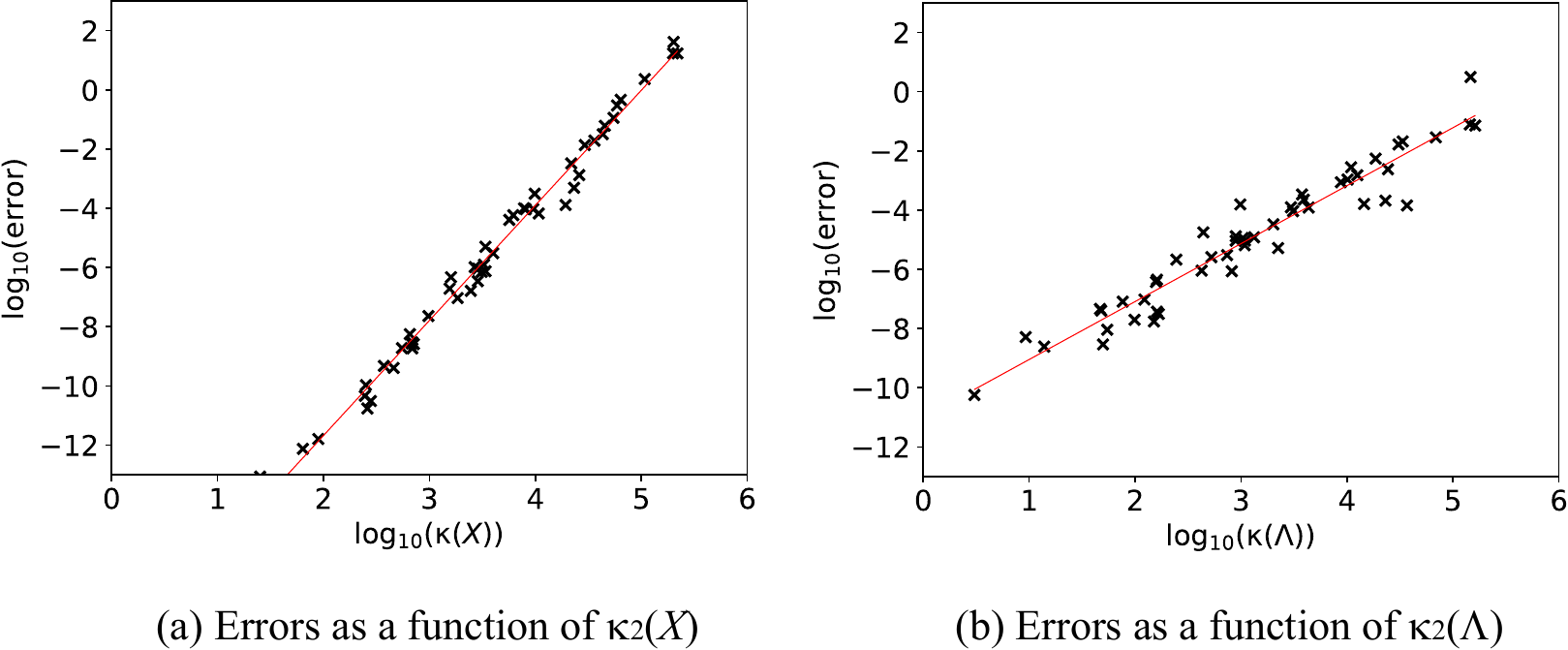}}
\caption{Errors in the computation of ${\rm sign}(A)$.}
\label{fig:fig1}
\end{figure}

In the third experiment, we fixed $\kappa_2(\Lambda)$ and $\kappa_2(X)$ to 100 and varied the matrix size $n$ randomly between 240 and 2560. The result is illustrated in Fig.~\ref{fig:fig2}. Here, the slope is estimated as 0.540. Since the bound \eqref{TermwiseError} on $\|E_1\|_F$ is cubic in $n$ (note that $\gamma_{3n}\simeq 3n{\bf u}$), it is a large overestimate. Essentially, this dependence on $n$ comes from the backward error bound \eqref{eq:DeltaAbound} in the solution of the linear simultaneous equation. However, this is a standard result in the roundoff error analysis of matrix computations and it seems to be difficult to improve it drastically. A possible approach for improvement would be to move from the {\it a priori} error analysis, which we have adopted in this paper, to {\it a posteriori} error analysis, which uses the computed residual to construct the bound. It will be a topic for future research.

\begin{figure}[h]
\centerline{\includegraphics[height=1.35in]{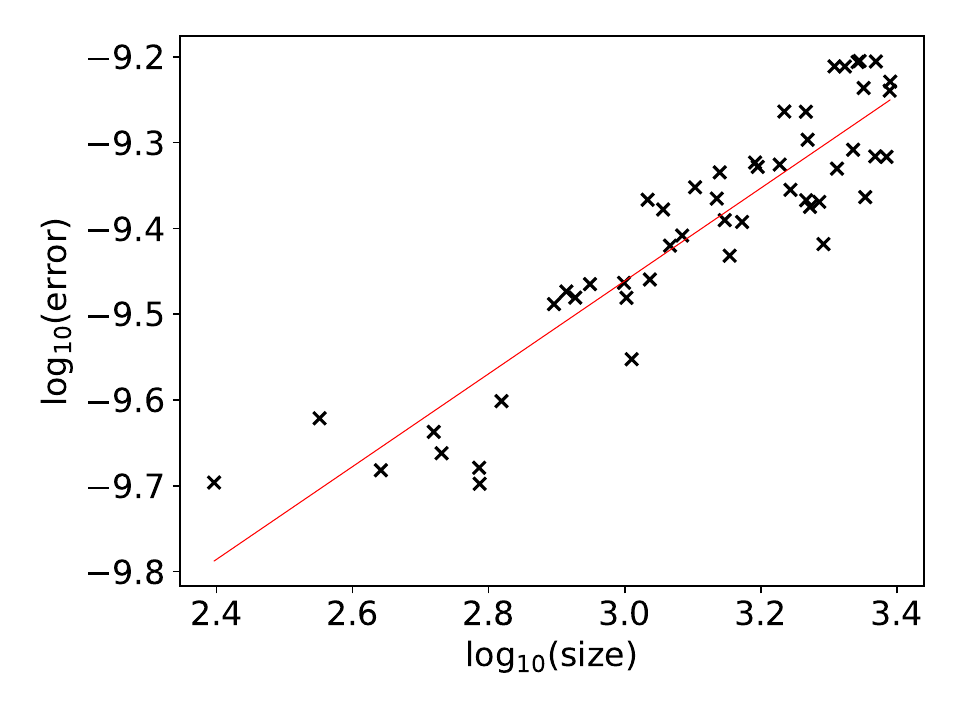}}
\caption{Errors of ${\rm sign}(A)$ as a function of $n$.}
\label{fig:fig2}
\end{figure}

\section{Conclusion}
In this paper, we presented a roundoff error analysis of the DE-based method for the matrix sign function. Our upper bound on the roundoff error is in good accordance with the actual error and partially explains why the accuracy of the method deteriorates when the input matrix is ill-conditioned or highly nonnormal. One possible remedy would be to expand $(t^2 I+A^2)^{-1}$ into partial fractions, by allowing complex arithmetic. This will reduce the dependency of Eq.~\eqref{eq:AF2} on $\kappa_2(X)$ from quadratic to linear, improving the overall dependency of the total error to $(\kappa_2(X))^3$. This will be a topic in our future study.

\section*{Acknowledgment}
This study is supported by JSPS KAKENHI Grant Numbers 19KK0255 and 22KK19772.

\references

\section*{Appendix A}

\begin{Lem}
\label{Lemma1}
Let $\lambda_j\in\mathbb{C}$ and ${\rm Re}(\lambda_j)\ne 0$. Then, Eq.~\eqref{eq:quadraticintegral} holds.
\end{Lem}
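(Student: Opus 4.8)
The plan is to evaluate the integral by the residue theorem after turning the integrand into a genuine rational function of the real variable $t$. Write $\lambda_j=\mu+{\rm i}\nu$ with $\mu={\rm Re}(\lambda_j)\ne 0$. Since replacing $\lambda_j$ by $-\lambda_j$ changes neither $\lambda_j^2$ nor the right-hand side of \eqref{eq:quadraticintegral}, I may assume $\mu>0$. Because $t$ is real we have $\overline{t^2+\lambda_j^2}=t^2+\bar\lambda_j^2$, so the integrand equals $1/\bigl[(t^2+\lambda_j^2)(t^2+\bar\lambda_j^2)\bigr]$, the reciprocal of the real quartic $t^4+2(\mu^2-\nu^2)t^2+(\mu^2+\nu^2)^2$, which is everywhere positive under our hypothesis; so the integral is a well-defined positive number.

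First I would factor $t^2+\lambda_j^2=(t-{\rm i}\lambda_j)(t+{\rm i}\lambda_j)$ and likewise $t^2+\bar\lambda_j^2=(t-{\rm i}\bar\lambda_j)(t+{\rm i}\bar\lambda_j)$, exhibiting the four poles $\pm{\rm i}\lambda_j,\pm{\rm i}\bar\lambda_j$ with imaginary parts $\pm\mu$. Hence exactly two of them, ${\rm i}\lambda_j$ and ${\rm i}\bar\lambda_j$, lie in the open upper half-plane (using $\mu>0$), and when $\nu\ne 0$ these are distinct, so all poles are simple. Closing the real axis with a semicircular arc of radius $R$ in the upper half-plane, the arc contribution is $O(R^{-3})$ since the integrand decays like $|t|^{-4}$, so it vanishes as $R\to\infty$ and the integral equals $2\pi{\rm i}$ times the sum of the residues at ${\rm i}\lambda_j$ and ${\rm i}\bar\lambda_j$.

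Next I would compute the two residues straight from the factorization, simplifying with $\lambda_j-\bar\lambda_j=2{\rm i}\nu$, $\lambda_j+\bar\lambda_j=2\mu$, and $\lambda_j\bar\lambda_j=|\lambda_j|^2$. A short calculation gives residues $1/(8\mu\nu\lambda_j)$ and $-1/(8\mu\nu\bar\lambda_j)$, whose sum is $\frac{1}{8\mu\nu}\cdot\frac{\bar\lambda_j-\lambda_j}{|\lambda_j|^2}=\frac{-{\rm i}}{4\mu|\lambda_j|^2}$; multiplying by $2\pi{\rm i}$ yields $\frac{\pi}{2\mu|\lambda_j|^2}=\frac{\pi}{2|{\rm Re}(\lambda_j)|\,|\lambda_j|^2}$, which is exactly \eqref{eq:quadraticintegral}. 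As an alternative one can bypass residues entirely: partial-fraction $\frac{1}{(t^2+\lambda_j^2)(t^2+\bar\lambda_j^2)}=\frac{1}{\bar\lambda_j^2-\lambda_j^2}\bigl(\frac{1}{t^2+\lambda_j^2}-\frac{1}{t^2+\bar\lambda_j^2}\bigr)$ and use $\int_{-\infty}^{\infty}(t^2+w)^{-1}\,dt=\pi/\sqrt{w}$ (principal branch) for $w\notin(-\infty,0]$, which applies because ${\rm Im}(\lambda_j^2)=2\mu\nu\ne 0$ and, with $\mu>0$, $\sqrt{\lambda_j^2}=\lambda_j$, $\sqrt{\bar\lambda_j^2}=\bar\lambda_j$; the same simplification then gives the result.

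The one case needing separate attention is $\nu=0$, i.e.\ $\lambda_j$ real: then the two upper half-plane poles coalesce into a double pole at ${\rm i}\mu$, and the cleanest route is either to note the classical value $\int_{-\infty}^{\infty}(t^2+\mu^2)^{-2}\,dt=\pi/(2|\mu|^3)$, which matches the formula, or to obtain \eqref{eq:quadraticintegral} by continuity as $\nu\to 0$. Conceptually there is no obstacle here; the only real work is keeping the factors of ${\rm i}$ and the signs straight in the residue computation and handling the $\nu=0$ degeneracy.
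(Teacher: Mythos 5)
Your proof is correct, and it takes a genuinely different route from the paper. The paper never leaves the real line: it writes $|t^2+\lambda_j^2|^2=\bigl((t+\nu_j)^2+\mu_j^2\bigr)\bigl((t-\nu_j)^2+\mu_j^2\bigr)$ and does a real partial-fraction decomposition, where the logarithmic-derivative pieces cancel over the symmetric interval (being shifted odd functions) and the two arctan pieces each contribute $\pi/|\mu_j|$. You instead pass to $(t^2+\lambda_j^2)(t^2+\bar\lambda_j^2)$ and close a contour in the upper half-plane; your identification of the poles ${\rm i}\lambda_j,{\rm i}\bar\lambda_j$, the residues $1/(8\mu\nu\lambda_j)$ and $-1/(8\mu\nu\bar\lambda_j)$, and the final simplification to $\pi/\bigl(2|\lambda_j|^2|{\rm Re}(\lambda_j)|\bigr)$ all check out, as does your complex partial-fraction alternative. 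The paper's argument is more elementary (no complex analysis), which fits a paper whose audience is numerical analysts; yours is shorter once residues are available and makes the structure of the four roots transparent. One point where your write-up is actually more careful: both your main argument and the paper's decomposition divide by $\nu_j$ and so degenerate when $\lambda_j$ is real, and you explicitly dispose of that case via the classical value $\int_{-\infty}^{\infty}(t^2+\mu^2)^{-2}dt=\pi/(2|\mu|^3)$ or by continuity, whereas the paper leaves this implicit.
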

\begin{proof}
Let $\lambda_j=\mu_j+{\rm i}\nu_j$, where ${\rm i}=\sqrt{-1}$. Then, the integrand can be rewritten as follows.
\begin{align}
&\frac{1}{|t^2+\lambda_j^2|^2} \nonumber \\
&= \frac{1}{(t^2+\mu_j^2-\nu_j^2)^2+4\mu_j^2\nu_j^2} \nonumber \\
&= \frac{1}{(t^2+\mu_j^2+\nu_j^2)^2-(2\nu_j t)^2} \nonumber \\
&= \frac{1}{(t+\nu_j)^2+\mu_j^2}\cdot\frac{1}{(t-\nu_j)^2+\mu_j^2} \nonumber \\
&= \frac{1}{4\nu_j(\mu_j^2+\nu_j^2)}\left\{\frac{t+2\nu_j}{(t+\nu_j)^2+\mu_j^2}+\frac{-t+2\nu_j}{(t-\nu_j)^2+\mu_j^2}\right\} \nonumber \\
&= \frac{1}{8\nu_j(\mu_j^2+\nu_j^2)}\left\{\frac{2(t+\nu_j)}{(t+\nu_j)^2+\mu_j^2}-\frac{2(t-\nu_j)}{(t-\nu_j)^2+\mu_j^2}\right\} \nonumber \\
& \quad + \frac{1}{4(\mu_j^2+\nu_j^2)}\left\{\frac{1}{(t+\nu_j)^2+\mu_j^2}+\frac{1}{(t-\nu_j)^2+\mu_j^2}\right\}.
\label{eq:lambdaint}
\end{align}
Now, let us choose $L\in\mathbb{R}$ so that $L>\max_{1\le j\le n}|\nu_j|$. Then, if $\nu_j\ge 0$, we have
\begin{align}
\int_{-L}^L\frac{2(t+\nu_j)}{(t+\nu_j)^2+\mu_j^2}\,dt
&= \int_{-L+\nu_j}^{L+\nu_j}\frac{2t'}{{t'}^2+\mu_j^2}\,dt' \nonumber \\
&= \int_{L-\nu_j}^{L+\nu_j}\frac{2t'}{{t'}^2+\mu_j^2}\,dt' \nonumber \\
&= \log\frac{(L+\nu_j)^2+\mu_j^2}{(L-\nu_j)^2+\mu_j^2} \nonumber \\
&\rightarrow 0 \quad (L\rightarrow\infty),
\end{align}
where the second equality is due to the fact that the integrand is an odd function in $t$. The same result holds also when $\nu_j<0$ and also for the integral of $2(t-\nu_j)/\{(t-\nu_j)^2+\mu_j^2\}$. Thus, when the integration interval is $(-\infty, \infty)$, the contribution from the second line from the last of \eqref{eq:lambdaint} is zero.

On the other hand, by letting $t=\mp\nu_j+|\mu_j|\tan\theta$, the contribution from the last line of \eqref{eq:lambdaint} can be computed as follows.
\begin{align}
& \frac{1}{4(\mu_j^2+\nu_j^2)}\left\{\int_{-\infty}^{\infty}\frac{dt}{(t+\nu_j)^2+\mu_j^2} + \int_{-\infty}^{\infty}\frac{dt}{(t-\nu_j)^2+\mu_j^2}\right\} \nonumber \\
&= \frac{1}{4(\mu_j^2+\nu_j^2)}\cdot\frac{2\pi}{|\mu_j|}=\frac{\pi}{2|\lambda_j|^2|{\rm Re}(\lambda_j)|}.
\end{align}
Hence, the lemma is proved.
\end{proof}

\begin{Lem}
\label{Lemma2}
Let $a,c\in\mathbb{R}$ satisfy $a>0$ and $|c|\le a^2$. Then, the following equality holds.
\begin{equation}
\int_0^{\infty}\frac{dx}{\sqrt{x^4+2cx^2+a^4}}=\frac{1}{a}K\left(\frac{\sqrt{a^2-c}}{a\sqrt{2}}\right),
\label{eq:elliptic1}
\end{equation}
where $K(k)$ is the complete elliptic integral of the first kind defined by
\begin{equation}
K(k)=\int_0^{\frac{\pi}{2}}\frac{d\theta}{\sqrt{1-k^2\sin^2\theta}}.
\label{eq:elliptic2b}
\end{equation}
\end{Lem}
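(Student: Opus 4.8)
The plan is to reduce the integral on the left to Legendre normal form through a short chain of elementary substitutions that exploit the palindromic (reciprocal) symmetry of the biquadratic $x^4+2cx^2+a^4$. First I would normalize by setting $x=av$, $v>0$, which turns the quartic into $a^4(v^4+2\kappa v^2+1)$ with $\kappa\equiv c/a^2\in[-1,1]$, so that
\[
\int_0^{\infty}\frac{dx}{\sqrt{x^4+2cx^2+a^4}}=\frac{1}{a}\int_0^{\infty}\frac{dv}{\sqrt{v^4+2\kappa v^2+1}} .
\]
Since $\sqrt{(1-\kappa)/2}=\sqrt{a^2-c}/(a\sqrt{2})$ (using $a>0$), it then suffices to show the last integral equals $K(\sqrt{(1-\kappa)/2})$.

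Next I would use the invariance of the $v$-integrand under $v\mapsto 1/v$, which makes $v=e^{\theta}$ ($\theta\in\mathbb{R}$) natural: $v^4+2\kappa v^2+1=e^{2\theta}(2\cosh 2\theta+2\kappa)$, so the integral becomes $\int_{-\infty}^{\infty}d\theta/\sqrt{2\cosh 2\theta+2\kappa}=2\int_0^{\infty}d\theta/\sqrt{2\cosh 2\theta+2\kappa}$ by evenness. Substituting $u=\tanh\theta\in(0,1)$ and using $\cosh 2\theta=(1+u^2)/(1-u^2)$, $d\theta=du/(1-u^2)$, a short computation gives
\[
2\int_0^{\infty}\frac{d\theta}{\sqrt{2\cosh 2\theta+2\kappa}}=2\int_0^{1}\frac{du}{\sqrt{(1-u^2)\bigl(2(1+\kappa)+2(1-\kappa)u^2\bigr)}} ,
\]
and then $u=\sin\phi$ clears the factor $1-u^2$, leaving $2\int_0^{\pi/2}d\phi/\sqrt{2(1+\kappa)+2(1-\kappa)\sin^2\phi}$. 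Finally, rewriting $2(1+\kappa)+2(1-\kappa)\sin^2\phi=4-2(1-\kappa)\cos^2\phi=4\bigl(1-\tfrac{1-\kappa}{2}\cos^2\phi\bigr)$ and replacing $\phi$ by $\tfrac{\pi}{2}-\psi$, the factor $\tfrac12$ coming from $\sqrt{4}=2$ cancels the leading $2$ and produces $\int_0^{\pi/2}d\psi/\sqrt{1-\tfrac{1-\kappa}{2}\sin^2\psi}=K(\sqrt{(1-\kappa)/2})$; substituting $\kappa=c/a^2$ back finishes the proof. (An alternative to the $e^{\theta},\tanh\theta$ steps is the single substitution $w=v-1/v$, which sends the integral to $\int_{-\infty}^{\infty}dw/\sqrt{(w^2+4)(w^2+2+2\kappa)}$, a standard form; I would still prefer the first route since it keeps everything self-contained.)

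The computations are routine, so the only real work is bookkeeping. I would need to check that every radicand stays nonnegative on its interval — this is exactly where $|c|\le a^2$ is used, since it forces $1-\kappa\ge 0$ and $1+\kappa\ge 0$ — and to track the multiplicative constants so that the final $2$ and the $1/\sqrt{4}$ cancel. The one point demanding a little care is the boundary: when $c=a^2$ the modulus is $0$ and both sides equal $\pi/(2a)$ via $K(0)=\pi/2$, while when $c=-a^2$ the quartic degenerates to $(x^2-a^2)^2$ and the integrand has a non-integrable singularity at $x=a$, so the identity holds only in the extended sense $K(1)=+\infty$; I would run the main argument for $|c|<a^2$ and dispose of $c=\pm a^2$ by these limiting remarks. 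I do not anticipate a genuine obstacle; the essential idea is just choosing the substitution chain $x=av\mapsto e^{\theta}\mapsto\tanh\theta\mapsto\sin\phi$ that lands directly on the normal form without creating spurious branches.
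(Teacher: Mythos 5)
Your proof is correct, and it is essentially the same argument as the paper's: the composite of your substitution chain $x=av$, $v=e^{\theta}$, $u=\tanh\theta$, $u=\sin\phi$, $\phi=\tfrac{\pi}{2}-\psi$ is exactly the paper's single change of variables $\cos\psi=(x^2-a^2)/(x^2+a^2)$, just built up in stages and run from the integral toward $K$ rather than the reverse. Your explicit treatment of the degenerate endpoint $c=-a^2$ (where both sides are $+\infty$) is a small point the paper glosses over.
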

\begin{proof}
Following \cite{biquadratic}, we compute the right-hand side of \eqref{eq:elliptic1} and show that it is equal to the left-hand side. Let
\begin{equation}
\cos\theta=\frac{x^2-a^2}{x^2+a^2}.
\end{equation}
Then, the right-hand side is a monotonically increasing function of $x$ in $[0,\infty)$ and $0 \le x <\infty$ corresponds to $\pi \ge \theta > 0$. Since
\begin{equation}
-\sin\theta\,d\theta = \frac{4a^2 x}{(x^2+a^2)^2}\,dx,
\end{equation}
we have
\begin{align}
d\theta&=-\frac{1}{\sin\theta}\cdot\frac{4a^2 x}{(x^2+a^2)^2}\,dx \nonumber \\
&= -\frac{1}{\sqrt{1-\left(\frac{x^2-a^2}{x^2+a^2}\right)^2}}\cdot\frac{4a^2 x}{(x^2+a^2)^2}\,dx \nonumber \\
&= -\frac{2a}{x^2+a^2}\,dx.
\label{eq:dtheta}
\end{align}
Also, note that
\begin{equation}
\sin^2\theta=1-\left(\frac{x^2-a^2}{x^2+a^2}\right)^2=\frac{4a^2 x^2}{(x^2+a^2)^2}.
\label{eq:sinsqtheta}
\end{equation}
Substituting \eqref{eq:dtheta} and \eqref{eq:sinsqtheta} into the right-hand side of \eqref{eq:elliptic1} gives
\begin{align}
& K\left(\frac{\sqrt{a^2-c}}{a\sqrt{2}}\right) \nonumber \\
&= \int_0^{\frac{\pi}{2}}\frac{d\theta}{\sqrt{1-\frac{a^2-c}{2a^2}\sin^2\theta}} \nonumber \\
&= \frac{1}{2}\int_0^{\pi}\frac{d\theta}{\sqrt{1-\frac{a^2-c}{2a^2}\sin^2\theta}} \nonumber \\
&= \frac{1}{2}\int_0^{\infty}\frac{1}{\sqrt{1-\frac{a^2-c}{2a^2}\cdot\frac{4a^2 x^2}{(x^2+a^2)^2}}}\cdot\frac{2a}{x^2+a^2}\,dx \nonumber \\
&= a\int_0^{\infty}\frac{dx}{\sqrt{x^4+2cx^2+a^4}}.
\end{align}
Hence, the lemma is proved.
\end{proof}

\begin{Lem}
\label{Lemma3}
For $k\in(-1,1)$, the following inequality holds.
\begin{equation}
K(k) \le \frac{\pi}{2}\left\{1-\frac{1}{\pi}\log(1-k^2)\right\}.
\end{equation}
\end{Lem}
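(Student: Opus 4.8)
The plan is to recast the inequality as a monotonicity statement and settle it with the classical differentiation formulas for the complete elliptic integrals $K$ and $E$. Since both sides are even in $k$, it suffices to treat $k\in[0,1)$. Writing $k'=\sqrt{1-k^2}$ and noting that $\frac{\pi}{2}\{1-\frac1\pi\log(1-k^2)\}=\frac{\pi}{2}+\log\frac1{k'}$, I would introduce
\[
\varphi(k):=\frac{\pi}{2}+\log\frac1{k'}-K(k)
\]
and aim to show $\varphi(k)\ge0$ on $[0,1)$. Since $\varphi(0)=\frac{\pi}{2}-K(0)=0$, it is enough to prove that $\varphi$ is nondecreasing on $(0,1)$.

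Next I would compute $\varphi'$. Using $\frac{dK}{dk}=\dfrac{E(k)-(1-k^2)K(k)}{k(1-k^2)}$, one obtains
\[
\varphi'(k)=\frac{k}{1-k^2}-\frac{dK}{dk}=\frac{g(k)}{k(1-k^2)},\qquad g(k):=k^2-E(k)+(1-k^2)K(k).
\]
As $k(1-k^2)>0$ on $(0,1)$, the monotonicity of $\varphi$ reduces to $g\ge0$. Here the key step is a second differentiation: using $\frac{dE}{dk}=\dfrac{E(k)-K(k)}{k}$, the expression for $g'$ collapses after cancellation to the simple form
\[
g'(k)=k\,(2-K(k)).
\]
Since $K$ is strictly increasing on $[0,1)$ with $K(0)=\pi/2<2$ and $K(k)\to\infty$ as $k\to1^-$, there is a unique $k_0\in(0,1)$ with $K(k_0)=2$; hence $g'>0$ on $(0,k_0)$ and $g'<0$ on $(k_0,1)$, i.e.\ $g$ first increases and then decreases. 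Combined with the endpoint values $g(0)=-\frac{\pi}{2}+\frac{\pi}{2}=0$ and $g(1^-)=1-E(1)+0=0$ --- here one uses $E(1)=1$ together with $(1-k^2)K(k)\to0$, which follows from the standard logarithmic growth of $K$ near $k=1$ --- a function that rises from $0$ and then returns to $0$ is nonnegative throughout. Thus $g\ge0$, so $\varphi'\ge0$, so $\varphi\ge0$, and by evenness the inequality holds for all $k\in(-1,1)$.

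I expect the main obstacle to be conceptual rather than technical, namely recognizing that a naive termwise estimate of the defining integral is hopeless. Because $K(k)$ itself behaves like $\log\frac1{k'}+O(1)$ as $k\to1$, any upper bound of the form $C_1+C_2\log\frac1{k'}$ with $C_2>1$ must eventually violate the target inequality, whose logarithmic coefficient is exactly $1$; and since $\sin\theta<\theta$ on $(0,\pi/2]$, every elementary pointwise lower bound on the integrand $\cos^2\theta+k'^2\sin^2\theta$ loses a constant factor in precisely this term. This is what forces the route through $E(k)$, where the identity $g'(k)=k(2-K(k))$ and the exact value $E(1)=1$ make everything close up cleanly; the remaining work --- carrying out the two differentiations and checking the cancellations --- is routine. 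Alternatively one could quote a sufficiently sharp logarithmic bound for $K$ from \cite{Andras10}, but the argument above is self-contained.
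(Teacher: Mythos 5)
Your proof is correct, but it takes a genuinely different route from the paper. The paper simply specializes a known two-sided bound of Andr\'as and Baricz for the generalized complete elliptic integral $K_a(k)=\frac{\pi}{2}\,{}_2F_1(a,1-a,1,k^2)$ to $a=1/2$, so its ``proof'' is essentially a citation. You instead give a self-contained calculus argument: writing the claim as $\varphi(k)=\frac{\pi}{2}+\log\frac{1}{k'}-K(k)\ge 0$, reducing via Legendre's derivative formula for $K$ to the nonnegativity of $g(k)=k^2-E(k)+(1-k^2)K(k)$, and then using $\frac{dE}{dk}=\frac{E-K}{k}$ to get the clean identity $g'(k)=k\bigl(2-K(k)\bigr)$, from which the sign of $g$ follows from its unimodal shape together with the endpoint values $g(0)=0$ and $g(1^-)=1-E(1)=0$. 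I checked the two differentiations and the limits $E(1)=1$, $(1-k^2)K(k)\to 0$; everything closes up as you claim, and the evenness reduction to $k\in[0,1)$ is fine. What your approach buys is independence from the external reference (and it exposes why the logarithmic coefficient $1$ is exactly right, since $K(k)=\log\frac{4}{k'}+o(1)$); what the paper's approach buys is brevity and, via the parameter $a$, a matching lower bound and a more general family of inequalities for free. Either argument is acceptable here; if you want yours to stand alone in print, you should state explicitly which standard facts about $E$ and $K$ (the derivative formulas, strict monotonicity of $K$, and the asymptotics near $k=1$) you are importing, with a reference.
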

\begin{proof}
This inequality can be shown as a special case of the results given in \cite{Andras10}. For real numbers $a,b,c$ and $k$ satisfying $c\ne 0, -1, -2, \ldots$ and $k\in(-1,1)$, the Gauss hypergeometric function is defined by
\begin{equation}
_2 F_1(a,b,c,k) = \sum_{n\ge 0}\frac{(a)_n(b)_n}{(c)_n}\cdot\frac{k^n}{n!},
\end{equation}
where $(a)_n=\Gamma(a+n)/\Gamma(a)=a(a+1)\cdots(a+n-1)$. It is known that $K(k)$ can be expressed with $_2 F_1(a,b,c,k)$ as follows.
\begin{equation}
K(k)=\frac{\pi}{2}\cdot\,_2F_1\left(\frac{1}{2},\frac{1}{2},1,k^2\right).
\end{equation}
Now, let us consider a generalized complete elliptic integral of the first kind defined by
\begin{equation}
K_a(k)=\frac{\pi}{2}\cdot\,_2F_1\left(a,1-a,1,k^2\right).
\end{equation}
Andra\'{s} and Baricz \cite[Eq. (7)]{Andras10} showed that $K_a(k)$ has the following lower and upper bounds when $a,k\in(0,1)$.
\begin{align}
& \frac{\pi}{2}\left\{1-\frac{\sin(\pi a)}{\pi}\left(\frac{1}{k^2}\log(1-k^2)+1\right)\right\} \nonumber \\
&< K_a(k) < \frac{\pi}{2}\left\{1-\frac{\sin(\pi a)}{\pi}\log(1-k^2)\right\}.
\end{align}
By setting $a=1/2$, we immediately obtain the following bound on $K(k)$ that is valid when $k\in(0,1)$.
\begin{equation}
K(k) < \frac{\pi}{2}\left\{1-\frac{1}{\pi}\log(1-k^2)\right\}.
\end{equation}
Noting that $K(0)=\frac{\pi}{2}$ and replacing $<$ with $\le$, we obtain an inequality that is valid also for $k=0$. Hence, the lemma is proved.
\end{proof}

\end{document}